  \let\oldchapter=\chapter
\def\resetpage{\setcounter{page}{1}}
\def\chapter{\expandafter\resetpage\oldchapter}  
    \numberwithin{equation}{section} 
   \newtheorem{theorem}{Theorem}
 \newtheorem{lemma}[theorem]{Lemma}
    \theoremstyle{definition}
   \numberwithin{theorem}{section}
\begin{document}

        \title{Origin of the uncurling metric formalism from an Inverse Problems regularization method}
    \author{Fred Greensite}

  \begin{abstract} As we have previously shown, recognition of an algebra's group of uncurling metrics leads to a generalization of the usual algebra norm and a novel set of isomorphism invariants.  It is herein detailed how this arose in the analysis of an unusual Inverse Problems regularization method.  \end{abstract}
  
  \maketitle
   
  \markright{How uncurling metrics arose in the context of Inverse Problems} \markleft{Fred Greensite}

 \section{Introduction} Reference \cite{fgreensite:2023} introduces the notions of ``uncurling metrics", ``unital norms", ``inversion characterization groups", and resulting novel isomorphism invariants pertaining to real unital associative algebras.  As detailed here, these concepts originated from a geometrical analysis of an unusual Inverse Problems regularization method.  
   
\section{The finite-dimensional case}
    
 Consider the linear problem described by, \begin{equation}\label{7/7/20.9}  y^\delta=Fx + \nu,\end{equation} where one is given a data vector $y^\delta$ that is a version of an unknown source vector $x$ as transformed by the known linear operator $F:\mathbb{R}^m\rightarrow\mathbb{R}^n$, but where the data vector has been corrupted by a ``noise" vector $\nu$ that is only characterized by \begin{equation}\label{7/12/20.1} \| y^\delta - Fx\|^2 \le \delta^2,\end{equation} where $\delta$ is given and $\|\cdot \|$ is the Euclidean norm.  
    
   If $F$ is an ill-conditioned matrix (e.g., a discretization of a linear compact operator), a solution estimate $F^{-1}y^\delta$ or $F^\dagger y^\delta$ is usually worthless ($F^\dagger$ is the pseudoinverse of $F$).  That is, consider the singular system for $F$ as  $\{u_i ,\sigma_i, v_i\}$, where $\{u_i\}$ is the set of eigenvectors of $FF'$, $\{v_i\}$ is the set of eigenvectors of $F'F$, and $\{\sigma_i^2 \}$ is the set of eigenvalues of $F'F$ (i.e., $\{\sigma_i \}$ is a set of singular values).   The least squares solution estimate of smallest norm is then, \begin{equation}\label{7/7/20.8} x_{\rm poor} = F^\dagger y^\delta = \sum_{ \{i: \sigma_i \ne 0\}} \frac{u_i' y^\delta}{\sigma_i}v_i,\end{equation} where  an expression like $a' b$ indicates the Euclidean inner product of vectors $a$ and $b$.   It is evident that the existence of small singular values  leads to potentially extreme noise amplification, so that the above is a poor estimate of $x$ as regards the at least possibly approachable $x^\dagger \equiv F^\dagger F x$.
 Instead, a common treatment (upon which zero-order Tikhonov regularization with the ``Discrepancy Principle" is based \cite{vogel:2002,engl:2000}) is to supply the solution estimate of minimum norm that is consistent with the modification of (\ref{7/12/20.1}) as \begin{equation}\label{7/7/20.3} \| y^\delta - Fx\|^2 =\delta^2 .\end{equation}   That is, the solution estimate is the point of smallest Euclidean norm on the ``discrepancy ellipsoid",   \begin{equation}\label{7/15/20.1} \partial\Omega_\delta \equiv \{ x\in \mathcal{D}(F): \| y^\delta - Fx\|^2 =\delta^2\},\end{equation} where $\mathcal{D}(F)$ is the domain of $F$.  We use $\mathcal{D}(F)$ here instead of $\mathbb{R}^m$ because we will want (\ref{7/15/20.1}) to also be relevant when $F$ is a compact operator (the general Inverse Problems setting \cite{engl:2000}).
  A Lagrange multiplier argument then gives the solution estimate as, \begin{equation}\label{7/8/20.9} x^\delta_D = (F'F + \gamma I)^{-1}F'y^\delta,\end{equation} where $I$ is the identity operator, and regularization parameter $\gamma$ is the reciprocal of the
   Lagrange multiplier as determined by the requirement that (\ref{7/7/20.3}) be satisfied (the subscript ``$_D$" indicates the estimate follows from a ``deterministic" treatment).  
 
 Alternatively, one may consider both $x$ and $\nu$ to be zero-mean Gaussian random vectors (``signal" and ``noise", respectively), with the covariance matrix of $x$ being $C_x$ (assumed for the moment to be nonsingular), and the covariance matrix of noise vector $\nu$ being $\epsilon^2 I$ ($\delta^2$ in (\ref{7/7/20.3}) might now be viewed as an estimate of the sum of the autocovariances of the components of $\nu$).  The {\it maximum  a posteriori} (Bayesian) estimate  is then, \begin{equation}\label{7/8/20.5} x^\delta_S = (F'F + \epsilon^2 C_x^{-1})^{-1} F'y^\delta,\end{equation}  where the subscript ``$_S$" indicates that the estimate follows from  a ``statistical" treatment \cite{vogel:2002}. 
  
 We can thus associate the deterministic solution estimate (\ref{7/8/20.9}) with a statistical interpretation (\ref{7/8/20.5}), as resulting from assigned signal and noise covariance matrices each proportional to the identity matrix, with $\gamma$ as the inverse of the square of the  ``signal-to-noise" ratio (this inverse square being the ratio of the respective noise and signal component variances).  Thus, the diagonal entries of the noise covariance matrix are assumed to be identical, and these represent the noise variance $\epsilon^2$ in each of the components of the data vector (i.e., the variances of each component of random vector $\nu$).   So the full noise covariance matrix is assumed known {\it a priori}, perhaps based on knowledge of $\delta^2$ under the supposition that $\delta^2 \approx n\epsilon^2$ in (\ref{7/7/20.3}), where $n$ is the dimension of the range of $F$.  The factor multiplying the identity matrix to give $C_x$, so that this factor is the (assumed uniform) variance of each component of the signal random vector $x$, then follows once $\gamma$ is computed.  Hence, from a statistical standpoint,  (\ref{7/7/20.3}), (\ref{7/8/20.9}) can be re-interpreted as implying a method resulting in {\it simultaneous} estimation of $x$ and the presumably uniform signal component variances $\epsilon^2/\gamma$. 
 
But as long as one is simultaneously estimating the solution and a covariance matrix feature, why stop there?   Instead of assuming that the   signal covariance matrix is proportional to the identity and estimating the proportionality constant simultaneously with the signal, one might only assume that the signal covariance matrix is diagonal in the basis defined by the eigenvectors of $F'F$, and seek to estimate the diagonal elements simultaneously with the signal (i.e., one assumes  that the matrix whose columns are the right singular vectors of  $F$ diagonalizes the signal covariance matrix).
The latter basis, $\{v_i\}$, is the natural one to be used in the context of this type of assumption since, considering the underlying setting where $F$ is a compact operator, one thing we {\it know} is that for the inverse problem to make sense the signal must be in the domain of $F$ (the problem is {\it defined} by $F$ operating on the signal). That is, we already know that the sum of the squares of the signal components in this basis is finite - and (absent other prior information) this is the only basis for which we know that to be true.  So, the assumption is simply that the components of random vector $x$ in this basis are independent, rather than additionally assuming that they all have the same variance (which would correspond to the assumption that the signal covariance matrix is proportional to $I$).

Proceeding in this way, one notes that the value of the $i$-th eigenvalue of $C_x$ is then the autocovariance of the $i$-th component of the signal random vector in the $\{v_i\}$-basis, i.e., the expectation of the square of the $i$-th component of random vector  $x$ in that basis (which is the expectation of $(v_i'x)^2$).   Thus, an alternative approach to the statistical treatment of (\ref{7/7/20.9}), (\ref{7/7/20.3}) (which we do not claim to be optimal in the solitary estimation of $x$)  would be to require that the eigenvalues of $C_x$ be the squares of the components of the solution estimate vector -  inspired by the actual meaning of those covariance matrix values as expectations  of the squares of the signal component random variables in the $\{v_i\}$-basis.  Such a solution estimate would then be  a fixed point of the dynamical system on $\mathbb{R}^m$ defined by iteration of the operator, \begin{equation}\label{10/19/10.3} A_\epsilon[w] \equiv \lim_{\alpha\rightarrow 0}\left(F'F+\epsilon^2\, V\mbox{\rm diag}\{(v_i'w)^2 + \alpha^2\}^{-1}V' \right)^{-1} F'y^\delta, \end{equation} where $V$ is the matrix whose $i$-th column is $v_i$ for each $i$. The role of $\alpha$ and $\lim_{\alpha\rightarrow 0}$ is to account for the possibility that one of the $v_i'w$ is zero.  To get the iterations started,  a solution estimate candidate $w$ (even chosen arbitrarily) is associated with a signal covariance matrix candidate $V \mbox{\rm diag}\{(v _i'w)^2\}V'$.    According to (\ref{7/8/20.5}),  $A_\epsilon[w]$ is then an updated solution estimate using the latter signal covariance matrix.  For the next iteration, the squares of the components of this updated solution estimate  $A_\epsilon[w]$ in the $\{v_i\}$-basis are used to define the entries of an updated version of the diagonal matrix on the right-hand-side of (\ref{10/19/10.3}) (i.e., updating the signal covariance matrix), which then supplies a successive updated solution estimate as $A_\epsilon[A_\epsilon[w]]$, etc. This iteration process can be repeated indefinitely.  A fixed point of the dynamical system evidently supplies a covariance matrix whose eigenvalues are the squares of the solution estimate components in the $\{v_i\}$-basis.  
 The fixed point $x=0$ is associated with a signal covariance matrix as the matrix of zeros. Thus, one would not select $w=0$ as an initial guess for the solution estimate to begin the iteration process (also, $w=0$ is very far from satisfying (\ref{7/7/20.3})).

But it is helpful that one can easily identify the fixed points of $A_\epsilon$. Inserting the previously noted
singular system associated with $F$ into the right-hand-side of (\ref{10/19/10.3}),  we obtain \begin{equation}\label{3/31/12.3} A_\epsilon[w] = \sum_i \frac{\sigma_i (u_i'y^\delta)}{\sigma_i^2 + \epsilon^2/(v_i'w)^2}v_i .\end{equation} A fixed point $p$ must satisfy  \begin{equation}\label{4/1/12.1} A_\epsilon[p] = \sum_i  \frac{\sigma_i (u_i'y^\delta)}{\sigma_i^2 + \epsilon^2/(v_i'p)^2}v_i = p = \sum_i  (v_i' p)v_i.\end{equation}  Since the coefficients of $v_i$ in the two sums in (\ref{4/1/12.1}) are necessarily equal, for $v_i'p\ne 0$ we obtain \begin{equation}\label{7/11/10.1} \sigma_iv_i'p = \frac{u_i' y^\delta \pm \sqrt{(u_i'y^\delta)^2 -4\epsilon^2}}{2}.\end{equation}  In the first place, this means that the series for a fixed point will only contain terms $i$ such that $(u_i'y^\delta)^2 >4\epsilon^2$.  Second, using Taylor series, it is easy to show that if the sign in front of the radical on the right-hand-side above is not the same as the sign of $u_i'y^\delta$, then $|\sigma_i v_i'p| =  O(\epsilon^2/|u_i'y^\delta|)$.  Indeed, whatever signal
 was contained in $u_i'y^\delta$ will have then been mostly 
 removed from the right-hand-side of (\ref{7/11/10.1}) (at least until $|u_i'y^\delta|$ is on the order of $\epsilon$, at which point in practical settings the data is mostly noise-dominated).   Thus, any fixed point of interest will be such that the sign in front of the radical is the same as the sign of $u_i'y^\delta$ .   The solution estimate that takes advantage of the greatest number of data subspaces available to this method is then,
\begin{equation}\label{7/19/10.1} p = \frac{1}{2}\sum_{\{i:\,|u_i'y^\delta|>2\epsilon\}} \left(u_i' y^\delta + {\rm sgn}(u_i' y^\delta) \sqrt{(u_i'y^\delta)^2 -4\epsilon^2}\right)\frac{v_i}{\sigma_i}.\end{equation}      
Note that the fixed points of of $A_\epsilon$ are truncations - a feature this solution estimate shares with the (quite different)  commonly used Truncated Singular Value Decomposition (TSVD) solution estimate \cite{vogel:2002}.  

The following result answers an obvious question of interest.

\begin{lemma} \label{7/6/21.9} Consider the solution estimate (\ref{7/19/10.1}) but with removal of the terms of the sum on the right-hand-side for which $|u_i'y^\delta|<4\epsilon$.  This is an attracting fixed point of the dynamical system resulting from repeated iterations of $A_\epsilon$. \end{lemma}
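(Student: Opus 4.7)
The plan is to exploit the fact that $A_\epsilon$ acts coordinate-wise in the $\{v_i\}$ basis. Setting $\xi_i := v_i'w$, formula (\ref{3/31/12.3}) gives
\[
v_i'\,A_\epsilon[w] \;=\; f_i(\xi_i) \;:=\; \frac{\sigma_i\,(u_i'y^\delta)\,\xi_i^2}{\sigma_i^2\xi_i^2 + \epsilon^2},
\]
which depends on $w$ only through $\xi_i$ (and the $\lim_{\alpha\to 0}$ in (\ref{10/19/10.3}) produces exactly the smooth extension with $f_i(0)=0$). Hence the discrete dynamical system on $\mathbb{R}^m$ factors as an uncoupled product of $m$ one-dimensional systems $\xi \mapsto f_i(\xi)$, and it suffices to analyze each separately.

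For a fixed $i$, $\xi = 0$ is always a fixed point, and the nonzero fixed points are the two roots $\xi_i^\pm$ of (\ref{7/11/10.1}), which are real iff $|u_i'y^\delta| \ge 2\epsilon$. A direct quotient-rule computation yields
\[
f_i'(\xi) \;=\; \frac{2\sigma_i\,(u_i'y^\delta)\,\epsilon^2\,\xi}{(\sigma_i^2\xi^2 + \epsilon^2)^2},
\]
so that $f_i'(0)=0$. Substituting the fixed-point identity $\sigma_i^2(\xi_i^\pm)^2 + \epsilon^2 = \sigma_i\,(u_i'y^\delta)\,\xi_i^\pm$ twice to clear the denominator reduces this to
\[
f_i'(\xi_i^\pm) \;=\; \frac{4\epsilon^2}{|u_i'y^\delta|\,\bigl(|u_i'y^\delta|\pm\sqrt{(u_i'y^\delta)^2 - 4\epsilon^2}\bigr)}.
\]
From this one reads off that the small root $\xi_i^-$ is always repelling ($f_i' > 1$) whereas the large root $\xi_i^+$ is attracting ($0 < f_i' < 1$) whenever $|u_i'y^\delta| > 2\epsilon$; under the stronger hypothesis $|u_i'y^\delta| \ge 4\epsilon$ one obtains the uniform quantitative bound $f_i'(\xi_i^+) \le [2(2+\sqrt{3})]^{-1}$, safely away from $1$.

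Finally, the truncated estimate $p$ in the lemma has $\{v_i\}$-coordinate equal to $\xi_i^+$ when $|u_i'y^\delta| > 4\epsilon$ and equal to $0$ otherwise, so since the dynamics factorize, the Jacobian of $A_\epsilon$ at $p$ is diagonal in the $\{v_i\}$ basis with $i$-th entry equal either to $f_i'(\xi_i^+)$ or to $f_i'(0) = 0$. Every diagonal entry has absolute value strictly less than one, so the spectral radius of the Jacobian at $p$ is strictly less than one, and the standard linearization theorem for discrete maps then supplies an open neighborhood of $p$ on which iterates of $A_\epsilon$ converge to $p$. The only nontrivial step is the algebraic simplification of $f_i'(\xi_i^+)$ via the fixed-point identity; the rest is bookkeeping enabled by the coordinate-wise decoupling of $A_\epsilon$.
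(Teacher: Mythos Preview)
Your argument is correct and follows the same overall strategy as the paper: both proofs observe that the Jacobian $\mathbf{D}[A_\epsilon]$ is diagonal in the $\{v_i\}$ basis (this is your ``coordinate-wise decoupling'' and the paper's equation (\ref{11/13/09.2})) and then verify that each diagonal entry has modulus strictly less than one.

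The execution differs, however. The paper bounds the $i$-th entry through a short chain of inequalities: it invokes $\tfrac12|u_i'y^\delta|<|\sigma_i v_i'p|$ twice, first to majorize the numerator and once more after dropping the $\epsilon^2$ in the denominator, arriving at $|u_i'y^\delta|>4\epsilon$ as a sufficient condition. You instead substitute the fixed-point identity $\sigma_i^2(\xi_i^+)^2+\epsilon^2=\sigma_i(u_i'y^\delta)\xi_i^+$ to obtain the closed form
\[
f_i'(\xi_i^+)=\frac{4\epsilon^2}{|u_i'y^\delta|\bigl(|u_i'y^\delta|+\sqrt{(u_i'y^\delta)^2-4\epsilon^2}\bigr)},
\]
from which $f_i'(\xi_i^+)<1$ follows already for $|u_i'y^\delta|>2\epsilon$. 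Thus your route is slightly sharper: it shows that the \emph{untruncated} estimate (\ref{7/19/10.1}) is already attracting, and that the $4\epsilon$ threshold in the lemma merely secures the uniform bound $f_i'(\xi_i^+)\le[2(2+\sqrt3)]^{-1}$ rather than being needed for attraction itself. The paper's inequality chain is quicker to write down but loses this information; your exact formula costs one algebraic simplification and recovers it.
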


\begin{proof}  For fixed point $p$ to be attracting, it is sufficient that $A_\epsilon$ be continuously differentiable, with $\|\mbox{\bf D}\big[A_\epsilon[p]\big]\|_2 <1$, where $\mbox{\bf D}$ is the operator derivative and $\|\cdot\|_2$ denotes the operator norm subject to the Euclidean norm on the domain of $F$.
Using (\ref{3/31/12.3}), we obtain
\begin{equation}  \mbox{\bf D}\big[A_\epsilon[w]\big]  = \sum_i  \left( \frac{2\epsilon^2 \, (\sigma_i v_i'w)(u_i'y^\delta)}{(\sigma_i^2(v_i'w)^2+\epsilon^2)^2}\right)v_i \, v_i',\label{11/13/09.2} \end{equation} where $v_i v_i'$ indicates the outer product of $v_i$ with itself. Inspection of the terms of the finite sum on the right-hand-side of (\ref{11/13/09.2}) indicates that $A_\epsilon$ is continuously differentiable.   Thus, for $p$ to be attracting it is sufficient  that for each $i$, \begin{equation}\label{4/22/09.1} 2\epsilon^2 \, |(\sigma_i v_i'p)( u_i'y^\delta)|<(\sigma_i^2(v_i'p)^2+\epsilon^2)^2.\end{equation}
From (\ref{7/11/10.1}), \begin{equation} \label{7/17/20.1}\frac{1}{2}|u_i'y^\delta| < |\sigma_iv_i'p|,\end{equation} based on the requirement that the sign of the radical match  the sign of $u_i'y^\delta$. Thus, it is sufficient that the inequality (\ref{4/22/09.1}) hold when on the left-hand-side $u_i'y^\delta$ is replaced by $2\sigma_i v_i'p$, and on
 the right-hand-side $\epsilon^2$ is replaced by $0$. One then obtains $4\epsilon^2 < (\sigma_i v_i'p)^2$ as a sufficient condition for $p$ to be attracting.  Applying (\ref{7/17/20.1}) again, one notes that it is sufficient if $4\epsilon^2 < (1/4)(u_i'y^\delta)^2$, i.e., $|u_i'y^\delta)|>4\epsilon$.
\end{proof}

For the purposes of this paper, the point in presenting all this is that the solution estimate $p$ given by (\ref{7/19/10.1}) is a critical point of the {\it geometric mean} of the solution estimate component absolute values on an implied ellipsoid  derived from the discrepancy ellipsoid on which $p$ lies - in contrast to the most commonly used solution estimate, which is a critical point of the Euclidean norm, i.e., a critical point of the {\it quadratic mean}, on the discrepancy ellipsoid on which it lies.  That is, the geometric mean has both a statistical and deterministic interpretation in the context of the regularization of inverse problems, just as the Euclidean norm does. 
 
 \begin{theorem} \label{7/6/21.6} Consider \begin{equation} \tag{\ref{7/7/20.9}} y^\delta = F x +\nu, \end{equation} where $y^\delta$ is known, $F:\mathbb{R}^k\rightarrow\mathbb{R}^k$ is a given nonsingular linear transformation associated with singular value decomposition matrix $U\Sigma V'$, and $\{u_i\}$ and $\{v_i\}$ are the sets of columns of $U$ and $V$, respectively.  Further consider the dynamical system on $\mathbb{R}^k$ defined by iteration of the operator, \begin{equation}\tag{\ref{10/19/10.3}} A_\epsilon[w] \equiv \lim_{\alpha\rightarrow 0}\left(F'F+\epsilon^2\, V\mbox{\rm diag}\{(v_i'w)^2 + \alpha^2\}^{-1}V' \right)^{-1} F'y^\delta. \end{equation} 
As a solution estimate for $x$, the fixed point $p_\epsilon$ of largest Euclidean norm of this dynamical system  has the following interpretations:
\begin{enumerate}\item {\rm Statistical:} If $\epsilon^2$ is the variance of zero-mean white Gaussian noise random vector $\nu$, then 
$p_\epsilon$ is the  {\rm maximum a posteriori} estimate for zero-mean Gaussian signal random vector $x$ of greatest Euclidean norm
 that results from a signal covariance matrix  whose eigenvectors are the members of the basis $\{v_i\}$ and with the additional feature that its eigenvalues are the squares of the components of the solution estimate with respect to those basis vectors.  \item  
{\rm Deterministic:} Denote the discrepancy of $p_\epsilon$ as $\delta_\epsilon \equiv \| y^\delta - Fp_\epsilon\|$, which may be adjusted by varying $\epsilon$ so as to allow for the influence of prior knowledge concerning the magnitude of unknown noise vector $\nu$.  Let $\mathbb{V}_\epsilon \subset \mathbb{R}^k$ be the space spanned by the subset of eigenvectors $\{v_i: |u_i'y^\delta| > 2\epsilon \}$.  
Consider the ellipsoid \[ \partial\Omega \equiv \{ z\in \mathbb{V}_\epsilon: \| y^\delta - Fz\|^2 = \delta_\epsilon^2 \}.\]   Then $p_\epsilon\in \mathbb{V}_\epsilon$ is a critical point of the geometric mean of the absolute values of the coordinates of the points of $\partial\Omega$ with respect to the above eigenvector subset basis.
\end{enumerate}
\end{theorem}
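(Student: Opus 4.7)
The plan is to verify the two interpretations separately, using the fixed-point characterization (\ref{7/11/10.1})--(\ref{7/19/10.1}) that has already been derived in the preceding discussion.

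For the statistical part (1), I would observe that substituting the self-consistent signal covariance $C_x = V\,\mbox{\rm diag}\{(v_i'x)^2\}V'$ into the MAP formula (\ref{7/8/20.5}) gives precisely the fixed-point equation $x=A_\epsilon[x]$ of (\ref{10/19/10.3}), the $\alpha \to 0$ limit handling the possibility that some components $v_i'x$ vanish. Hence the MAP estimates satisfying the self-consistency requirement are exactly the fixed points characterized by (\ref{7/11/10.1}). Among those, the one of greatest Euclidean norm is obtained by keeping the maximum allowed set of indices $I_\epsilon \equiv \{i:|u_i'y^\delta|>2\epsilon\}$ and, at each such $i$, choosing the sign of the radical to match $\mbox{\rm sgn}(u_i'y^\delta)$: the opposite choice contributes a coefficient of size only $O(\epsilon^2/|u_i'y^\delta|)$, as already noted following (\ref{7/11/10.1}), and thus decreases the norm. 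This identifies $p_\epsilon$ with (\ref{7/19/10.1}).

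For the deterministic part (2), I would set up a Lagrange multiplier argument on $\partial\Omega \subset \mathbb{V}_\epsilon$. Since $\log$ is monotone, the critical points of the geometric mean of $\{|v_i'z|\}_{i \in I_\epsilon}$ on $\partial\Omega$ coincide with those of
\[ \Phi(z) \equiv \sum_{i \in I_\epsilon}\log|v_i'z|. \]
Taking $\{v_i\}_{i \in I_\epsilon}$ as the orthonormal basis of $\mathbb{V}_\epsilon$, the two relevant gradients are
\[ \nabla\Phi(z) = \sum_{i \in I_\epsilon}\frac{v_i}{v_i'z}, \qquad \nabla\|y^\delta-Fz\|^2 = -2\sum_{i \in I_\epsilon}\sigma_i\bigl(u_i'y^\delta - \sigma_i v_i'z\bigr)v_i, \]
where in the second expression the $i\notin I_\epsilon$ contributions are constants on $\mathbb{V}_\epsilon$ and so drop out. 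The Lagrange condition at $p_\epsilon$, read componentwise in the $v_i$-basis, therefore reduces to demanding that the scalar $a_i(b_i - a_i)$ be independent of $i \in I_\epsilon$, where $a_i \equiv \sigma_i v_i'p_\epsilon$ and $b_i \equiv u_i'y^\delta$.

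The crux is that this is exactly the identity hidden in (\ref{7/11/10.1}): rearranging that formula as $2a_i - b_i = \mbox{\rm sgn}(b_i)\sqrt{b_i^2 - 4\epsilon^2}$ and squaring yields $a_i(b_i - a_i) = \epsilon^2$ for every $i \in I_\epsilon$, so the single multiplier $\lambda = -1/(2\epsilon^2)$ verifies the Lagrange condition uniformly in $i$. I expect the only genuine subtlety to be conceptual rather than computational, namely taking care that the constraint gradient is computed intrinsically inside $\mathbb{V}_\epsilon$ and expressed in the same $\{v_i\}_{i\in I_\epsilon}$-basis in which the geometric mean is formulated; once that alignment is made, the quadratic identity from (\ref{7/11/10.1}) closes the argument essentially immediately.
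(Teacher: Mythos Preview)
Your argument is correct. The statistical part matches the paper's (which simply appeals to the MAP formula (\ref{7/8/20.5}) and the fixed-point form of (\ref{10/19/10.3})), and your extra remarks on why the largest-norm fixed point is (\ref{7/19/10.1}) are consistent with the discussion preceding the theorem.

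For the deterministic part your route differs from the paper's. The paper does not pair $\Phi=\sum\log|v_i'z|$ directly against the constraint. Instead it introduces the auxiliary quadratic $h(z)=z'C_{\hat p}^{-1}z$ with $C_{\hat p}=\mbox{diag}\{(v_i'p)^2\}$, recognizes from the fixed-point equation (\ref{12/19/20.1}) that $\hat p$ is the Tikhonov-type minimizer of $h$ on $\partial\Omega$ (so the level set $\mathcal{H}=\{h=n\}$ is tangent to $\partial\Omega$ at $\hat p$), and then observes that $\nabla h(\hat p)$ and $\nabla g(\hat p)$, with $g(z)=\prod|z_i|$, are both proportional to $(1/p_1,\dots,1/p_n)$; tangency of $\mathcal{G}=\{g=g(\hat p)\}$ to $\partial\Omega$ follows by transitivity. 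Your approach bypasses $h$ entirely: you write the Lagrange condition for $\Phi$ against $\|y^\delta-Fz\|^2$ componentwise, reduce it to the requirement that $a_i(b_i-a_i)$ be independent of $i$, and then read off $a_i(b_i-a_i)=\epsilon^2$ directly from the quadratic hidden in (\ref{7/11/10.1}). This is shorter and makes the role of $\epsilon^2$ as the (reciprocal) multiplier completely explicit. What the paper's detour buys is the explicit link between the geometric-mean critical point and the weighted-Tikhonov minimization $z'C_{\hat p}^{-1}z$, which is the bridge to the ``uncurling metric'' interpretation emphasized afterward; your direct computation proves the theorem just as well but does not surface that intermediate structure.
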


\begin{proof}
The statistical interpretation follows from the prior observations related to the form of the right-hand-side of (\ref{7/8/20.5}) and inspection of the form of (\ref{10/19/10.3}) as regards a fixed point.

We now consider the deterministic interpretation.   
To simplify our notation, without loss we assume an arrangement of the sequence of right singular vectors of $F$ such that $\{v_i\}_{i=1}^n$ is the set $\{v_i: |u_i'y^\delta| > 2\epsilon \}$, and we will write $p_\epsilon$ as simply $p$.  We observe that $p$ is a member of $\mathbb{V}_\epsilon$, since we have previously shown that $p$ is given by (\ref{7/19/10.1}). 
Define $\hat{p} \equiv (v_1'p,\dots,v_n'p)$, i.e., $\hat{p}$ is the coordinate expression of $p$ as a member of $\partial\Omega\subset \mathbb{V}_e$ (thus, $\hat{p}$ is $p$ with suppression of the latter's null-valued coordinates with respect to the full basis of $\mathbb{R}^k$ given by $\{v_i\}$). 
Define $C_p \equiv \mbox{diag}\{(v_i'p)^2\}$ and  $C_{\hat{p}} \equiv  \mbox{diag}\{(v_i'p)^2\}_{i=1}^n$.  Since $p$ is a fixed point of $A_\epsilon$,  (\ref{10/19/10.3}) implies \begin{equation} p = \lim_{\alpha\rightarrow 0} \left(F'F+\epsilon^2\,\left[C_p + \mbox{diag}\{\alpha^2\}\right]^{-1}\right)^{-1} F'y^\delta.\label{12/14/20.1}\end{equation} However, the null coordinates of $p$ correspond to the null diagonal values of $C_p$.  There is no intrinsic loss in suppressing these null entries.  {\it Hence, the right-hand-side of (\ref{12/14/20.1}), can be cast as the solution to the variational problem of finding the point minimizing $s'C^{-1}_{\hat{p}} s$ over $z\in \partial\Omega \subset \mathbb{R}^n$, which is $\hat{p}$}.
Specifically, (\ref{12/14/20.1}) implies, and is replaced by, 
\begin{equation} \hat{p} = \left(\hat{F}'\hat{F}+\epsilon^2\,C_{\hat{p}}^{-1}\right)^{-1} \hat{F}'y^\delta, \label{12/19/20.1}\end{equation} where $\hat{F}$ is the operator resulting from the restriction of the domain of $F$ to $\mathbb{V}_\epsilon$.
  According to the principle of Lagrange multipliers, the point minimizing $s'C^{-1}_{\hat{p}} s$ on \[ \partial\Omega = \{s\in \mathbb{V}_\epsilon: \| \hat{F} s -y^\delta\|^2 = \delta_\epsilon^2\} = \{s\in \mathbb{V}_\epsilon: (s'\hat{F}' - (y^{\delta})')(\hat{F} s - y^\delta) = \delta_\epsilon^2\},\] is the critical point of, \[ \mathcal{L}(s,\lambda) \equiv \lambda\left[ (s'\hat{F}' - (y^{\delta})')(\hat{F} s - y^\delta) -\delta_\epsilon^2\right] + s'C^{-1}_{\hat{p}} s.\] 
Setting to zero all of the partial derivatives of $\mathcal{L}$ with respect to the components of $s$ leads to the solution being given as the right-hand-side of (\ref{12/19/20.1}), where the factor $\epsilon^2$ is the reciprocal of the Lagrange multiplier.  The latter is determined by the final equation $\frac{\partial \mathcal{L}}{\partial \lambda} = 0$, which places the solution on $\partial\Omega$.  This solution is indeed $\hat{p}$ since we are {\it given} (\ref{12/19/20.1}) (it is implied by (\ref{12/14/20.1})) and we know that $\hat{p}$ lies on $\partial\Omega$. 

Now define the function  $h:\mathbb{V}_\epsilon\rightarrow \mathbb{R}$ with $h(s) \equiv s'C^{-1}_{\hat{p}} s$. According to the prior paragraph, $\hat{p}$ minimizes $h(z)$ on  $\partial\Omega$.   Consider the level set of $h$ given by the ellipsoid, \[ \mathcal{H} \equiv \{s\in \mathbb{V}_\epsilon: s'C_{\hat{p}}^{-1} s = n \} = \left\{s\in\mathbb{V}_\epsilon: \sum_{i=1}^n \frac{s_i^2}{p_i^2} = n\right\},\] where $s_j\equiv v_j's$ and  $p_j\equiv v_j'p$ for $v_j\in \{v_i\}_{i=1}^n$.  
The point $\hat{p}$ is evidently a member of $\mathcal{H}$.  The tangent space of ellipsoid $\mathcal{H}$ at $\hat{p}$  has normal vectors proportional to the gradient of $h$ at $\hat{p}$, and with respect to the $\{v_i\}_{i=1}^n$-basis,
\begin{equation}\label{3/18/12.1}  \nabla h(s)\Big|_{s=\hat{p}} = \,\, 2\left(\frac{1}{p_1},\dots, \frac{1}{p_n}\right).\end{equation} But as we have noted above, $\hat{p}$ is the solution estimate minimizing $s'C^{-1}_{\hat{p}} s$ on $\partial\Omega$.  Thus, $\mathcal{H}$ is tangent to $\partial\Omega$ at $\hat{p}$.  
 
We next introduce  $g:\mathbb{V}_\epsilon\rightarrow \mathbb{R}$ with $g(s) = \prod_{i=1}^n |s_i|$ (the $n$-th power of the geometric mean of the absolute values of a point's components with respect to the $\{v_i\}_{i=1}^n$-basis).  The level set of $g$ that contains $\hat{p}$ is,  \[\mathcal{G} \equiv \left\{ s\in \mathbb{V}_\epsilon:\prod_{i=1}^n |s_i| = g(\hat{p})\right\}.\] We observe that at any point of this level set,
   \begin{equation}\label{9/18/22.1} \frac{\partial g}{\partial s_j} = {\rm sgn}(s_j) \prod_{i\ne j} |s_i| = \frac{g(\hat{p})}{s_j}.\end{equation} The tangent space of $\mathcal{G}$ at $\hat{p}$ has normal vectors proportional to the gradient of $g(s)$ at $\hat{p}$ and, with respect to the $\{v_i\}_{i=1}^n$-basis, (\ref{9/18/22.1}) implies \begin{equation}\label{9/15/20.1} \nabla g(s)\Big|_{s=\hat{p}} = \,\, g(\hat{p}) \left(\frac{1}{p_1},\dots, \frac{1}{p_n}\right),\end{equation} which according to (\ref{3/18/12.1}) is proportional to $\nabla h(s)\big|_{s=\hat{p}}$.
Hence, $\mathcal{H}$ and $\mathcal{G}$ both contain $\hat{p}$, and share the same tangent space at $\hat{p}$, i.e., $\mathcal{G}$ and $\mathcal{H}$ are tangent at $\hat{p}$.  But we have seen that $\mathcal{H}$ is tangent to $\partial\Omega$ at $\hat{p}$.  Hence $\mathcal{G}$ is tangent to $\partial\Omega$ at $\hat{p}$, and $\hat{p}$ is thereby a critical point of $g$ on $\partial\Omega$.  The $n$-th root of $g$, which is the geometric mean, then also has a critical point on $\partial\Omega$ at $\hat{p}$, which is equivalent to the final statement in the theorem. 
 \end{proof}

 \section{The regularization method motivates the program of \cite{fgreensite:2023}}
 
Theorem \ref{7/6/21.6} indicates that the regularized solution estimate is a critical point of the geometric mean on a discrepancy manifold.  This greatly contrasts with zero-order Tikhonov regularization, where the regularized solution estimate is a critical point of the Euclidean norm on a discrepancy manifold (for Tikhonov regularization in general, the procedure is at least performed with seminorms).  So we are already motivated to think of the geometric mean as some kind of a ``norm", albeit one not satisfying the triangle inequality. 

The geometric mean pertains to this regularization method precisely because of (\ref{9/18/22.1}), (\ref{9/15/20.1}), which rely on on $\nabla g(s) = s^{-1}g(s)$, wherein $s=(s_1,\dots,s_n)$, $g(s)= \prod_i |s_i|$ (the geometric mean raised to the $n$-th power), and \begin{equation}\label{12/5/23.1} s^{-1}\equiv\left(\frac{1}{s_1},\dots, \frac{1}{s_n}\right).\end{equation} 
The geometric mean is a degree-1 positive homogeneous function extracted from $g(s) $, but we will modify it slightly by multiplying it by a constant for reasons that will be clear shortly.   Thus, for a point $s = (s_1, \dots, s_n)\in \mathbb{R}^n$, with $s_i\ne 0$ for $i=1,\dots,n$, we introduce  \begin{equation}\label{12/6/23.3} \ell(s)\equiv\sqrt{n}\left(\prod_i |s_i|\right)^{1/n}.\end{equation}   Combining this with (\ref{12/5/23.1}), we have \begin{equation}\label{12/6/23.1}\ell(s)\ell(s^{-1}) = n = \mathbf{1}\cdot\mathbf{1}\equiv \|\mathbf{1}\|^2,\end{equation} where $\mathbf{1}\equiv (1,1,\dots,1)$.  For $s$ in a neighborhood of $\mathbf{1}$,
\[ \nabla \ell(s) = \frac{\ell(s)}{n}\left(\frac{1}{s_1},\dots,\frac{1}{s_n}\right) = \frac{s^{-1}}{\ell(s^{-1})},  \]i.e., \begin{equation}\label{12/5/23.2} s^{-1} = \ell(s^{-1})\nabla\ell(s),\end{equation}  Applying $\ell$ to both sides of the above equation, and using the degree-1 positive homogeneity of $\ell$,  we obtain \begin{equation}\label{12/6/23.5} \ell(\nabla\ell(s))
= 1.\end{equation}

 Equations (\ref{12/5/23.2}), (\ref{12/6/23.5}), describing the geometric mean, look a lot like the equations describing the Euclidean norm,  \begin{equation}\label{11/29/23.5}  s = \ell(s)\nabla\ell(s),\end{equation} and \begin{equation}\label{11/29/23.6} \ell(\nabla\ell(s)) = 1.\end{equation}  That is, the solution $\ell(s)$ to (\ref{11/29/23.5}), (\ref{11/29/23.6}) is the Euclidean norm, and the two equations express a point as the product of its Euclidean norm with an associated unit-norm direction.
So from this standpoint also, it seems that the geometric mean {\it is} indeed rather ``norm-like".  In particular, for the case where $\ell(s)$ derives from the geometric mean as in (\ref{12/6/23.3}) (which, in the above regularization context, we have already come to think of as some kind of a ``norm"), according to (\ref{12/5/23.2}), (\ref{12/6/23.5}) a point in the neighborhood of $\mathbf{1}$ has the decomposition,
\begin{equation}\label{8/4/20.2} s = \ell(s)\nabla \ell(s^{-1}) \mbox{  with   }  \ell(\nabla \ell(s^{-1})) = 1,\end{equation} where the expression $\nabla \ell(s^{-1})$ means that the gradient is obtained following which it is evaluated at $s^{-1}$.  Like the pair of equations (\ref{11/29/23.5}), (\ref{11/29/23.6}) for the Euclidean norm, the equations (\ref{8/4/20.2}) can also be interpreted as expressing a point as the product of its ``norm" with an associated unit-``norm"  direction.

Looking at (\ref{12/6/23.1}), (\ref{12/5/23.2}),  (\ref{12/6/23.5}) in isolation, we have to wonder: what is the significance of the ``inverse of a point", $s^{-1}$?  Well, it suggests that $s$ is an element of a unital algebra.  Which algebra?  In the case at hand, the inverse of a point happens to be given by (\ref{12/5/23.1}).   Equation (\ref{12/5/23.1}) indicates that the multiplicative identity element is ${\bf 1}= (1,1,\dots, 1)$, since this element will be equal to its inverse.  But we must have  $ss^{-1} = (s_1,\dots,s_n)(1/s_1,\dots,1/s_n) = {\bf 1}$, where element juxtaposition denotes the product. The equations of the last two sentences indicate that we must then be dealing with the algebra (whose vector space of elements is $\mathbb{R}^n$) defined by component-wise addition and multiplication, i.e., $\bigoplus_{i=1}^n \mathbb{R}$.    And that gets one thinking: how many other algebras are associated with an entity $\ell(s)$ satisfying something like (\ref{12/6/23.1}), (\ref{12/5/23.2}),  (\ref{12/6/23.5}), i.e., satisfying ``norm-like" equations respecting the multiplicative inversion operation of the algebra?
 
The answer is not many.  This is because, given (\ref{12/6/23.1}) (which mandates respect for the multiplicative inversion operation), the right-hand-side of (\ref{12/5/23.2}) is a gradient, i.e., $\nabla\log \ell(s)$ - but the left-hand-side, $s^{-1}$,  is not a gradient for most unital algebras.  
To rectify that, we would have to replace $s^{-1}$ on the left-hand-side of (\ref{12/5/23.2}) with $Ls^{-1}$, where $L$ has the feature that $Ls^{-1}$ {\it is} a gradient (and by the way, the left-hand-side of (\ref{12/6/23.5}) would then become $\ell(L^{-1}\nabla\ell(s))$, assuming $L$ to be nonsingular).  

The requirement that $Ls^{-1}$ be a gradient means that $L$ must satisfy, \[ d([Ls^{-1}]\cdot ds) = 0.\]  The program developed in \cite{fgreensite:2023} proceeds from that equation, presented in the context of Definition 2.1 of that paper.  

\section{Extension of the regularization method to compact operators}

The following theorem establishes that the above ``geometric mean methodology" is indeed a regularization method, i.e., convergent for suitable selection of a regularization parameter based on knowledge of the noise magnitude.    But we hasten to add that although the geometric mean application is associated with a regularization method that works well in practice, the latter is {\it not} asserted to have efficacy comparable to that of e.g., zero-order Tikhonov regularization.  In fact, derivation of convergence rates for typical subsets of the domain of the relevant compact operator is problematic.  

\begin{theorem} \label{7/6/21.7} Let $F$ be a given linear compact operator with singular value expansion $F =  \sum_i \sigma_i u_i v_i'$, and consider, \begin{equation} \tag{\ref{7/7/20.9}} y^\delta = F x +\nu, \end{equation} where $y^\delta$ in the range of $F$ is known, but $x$ in the domain of $F$ and $\nu$ in the range of $F$ are unknown.  Suppose also that,  \begin{equation}\tag{\ref{7/12/20.1}}  \| Fx-y^\delta\|^2 \le \delta^2,\end{equation} where $\delta$ is known.   On the domain of $F$, the greatest-magnitude fixed point of the operator,\begin{equation}\label{10/17/20.1}A_\epsilon[w] \equiv \lim_{\alpha\rightarrow 0}\left(F'F+\epsilon^2\, 
\sum_i \frac{v_iv_i'}{(v_i'w)^2+\alpha^2}
\right)^{-1} F'y^\delta, \end{equation} is given by \begin{equation}\label{7/15/20.2} x^\delta = \frac{1}{2}\sum_{\{i:\,|u_i'y^\delta|>2\epsilon\}} \left(u_i' y^\delta + {\rm sgn}(u_i' y^\delta) \sqrt{(u_i'y^\delta)^2 -4\epsilon^2}\right)\frac{v_i}{\sigma_i}.\end{equation} 
If we specify $\epsilon = \epsilon(\delta) = \delta$, then for $x^\dagger \equiv F^\dagger F x$  we have $\lim_{\epsilon(\delta)\rightarrow 0} \| x^\dagger - x^\delta \| =0 $.
\end{theorem}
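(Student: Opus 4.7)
The plan is to reduce everything to a componentwise analysis in the $\{v_i\}$ basis and then split $\|x^\dagger-x^\delta\|^2$ into a finite head, controlled by pointwise convergence, and an infinite tail, controlled by pointwise domination by $(v_i'x)^2$. Write $a_i:=u_i'y^\delta$ and $b_i:=u_i'\nu$, so that $a_i=\sigma_i(v_i'x)+b_i$ and $|b_i|\leq\|\nu\|\leq\delta$. From (\ref{7/15/20.2}),
\[ v_i'x^\delta \;=\; r_\epsilon(a_i)\,\frac{a_i}{\sigma_i}\quad\text{when }|a_i|>2\epsilon,\qquad v_i'x^\delta=0\text{ otherwise},\]
where $r_\epsilon(a):=\tfrac12\bigl(1+\sqrt{1-4\epsilon^2/a^2}\bigr)\in[\tfrac12,1]$.

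The heart of the argument is an a priori bound exploiting the choice $\epsilon=\delta$. Whenever the activation $|a_i|>2\epsilon$ holds,
\[ \sigma_i|v_i'x| \;=\; |a_i-b_i|\;\geq\; |a_i|-|b_i| \;>\; 2\epsilon-\delta \;=\; \epsilon, \]
so $1/\sigma_i<|v_i'x|/\epsilon$ and hence $|b_i/\sigma_i|\leq|b_i|\,|v_i'x|/\epsilon\leq|v_i'x|$, yielding $|a_i/\sigma_i|\leq 2|v_i'x|$. Combined with $r_\epsilon\in[0,1]$, this produces the universal pointwise bounds $|v_i'x^\delta|\leq 2|v_i'x|$ and $(v_i'x-v_i'x^\delta)^2\leq 9(v_i'x)^2$ for every $i$. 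Moreover $v_i'x^\delta=0$ whenever $v_i'x=0$, since then $|a_i|=|b_i|\leq\delta<2\epsilon$, so the discarded-index error on the zero coordinates vanishes exactly.

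Given $\eta>0$, I use $\sum_i(v_i'x)^2\leq\|x\|^2<\infty$ to pick $N$ with $9\sum_{i>N}(v_i'x)^2<\eta/2$; the previous paragraph bounds the tail of the error by this quantity uniformly in $\delta$ (and in the particular choice of $\nu$). For the head $i\leq N$, pointwise convergence is straightforward: for each such $i$ with $v_i'x\neq 0$, $a_i\to\sigma_i(v_i'x)\neq 0$ as $\delta\to 0$ (because $|b_i|\leq\delta$), so $|a_i|>2\epsilon$ eventually, $r_\epsilon(a_i)\to 1$, and $a_i/\sigma_i\to v_i'x$; the case $v_i'x=0$ is already handled. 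Summing $N$ vanishing sequences, the head drops below $\eta/2$ for $\delta$ small enough, giving $\|x^\dagger-x^\delta\|^2<\eta$.

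The main obstacle, and the place where the structure of this particular estimator earns its keep, is the tail. A bare pseudoinverse would let $b_i/\sigma_i$ blow up through small singular values, but the activation threshold $|a_i|>2\epsilon=2\delta$ forces every retained $\sigma_i$ to satisfy $\sigma_i>\epsilon/|v_i'x|$, which is precisely what is needed to dominate the noise contribution by $|v_i'x|$ and thereby to ride on the square-summability of $(v_i'x)$. The pairing $\epsilon=\delta$ is essential: any significantly smaller $\epsilon$ would destroy the inequality $|a_i|-|b_i|>\epsilon$ and break the key bound.
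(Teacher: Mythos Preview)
Your proof is correct and rests on the same decisive inequality as the paper's: when $\epsilon=\delta$ and the index $i$ is active ($|a_i|>2\epsilon$), one has $\sigma_i|v_i'x|>\epsilon$, i.e.\ $\delta/\sigma_i<|v_i'x|$. From there, however, the two arguments are organized differently. The paper decomposes $x-x^\delta$ into three sums (a truncation piece over inactive indices, and two ``stability'' pieces over active indices, the second coming from $1-\sqrt{1-4\epsilon^2/a_i^2}$), and treats each separately, invoking the Dominated Convergence Theorem for the first and third and a direct bound $\delta^2/\sigma_{m-1}^2<(v_{m-1}'x)^2$ for the second, then arguing that the index $m-1$ of the smallest active singular value tends to infinity. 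Your route is more economical: you collapse all cases into the single pointwise domination $(v_i'x-v_i'x^\delta)^2\le 9(v_i'x)^2$ (trivial on inactive indices, and on active indices a consequence of $|a_i/\sigma_i|\le 2|v_i'x|$), and then run a standard head/tail $\eta$-argument using $\sum_i(v_i'x)^2<\infty$. This buys you a cleaner proof that sidesteps the paper's somewhat informal claim that $\sigma_{m-1}\to 0$ as $\delta\to 0$; the paper's three-way split, on the other hand, makes the separate roles of truncation and noise amplification more visible. Both arguments tacitly assume no zero singular values (equivalently, work with $x^\dagger$ componentwise), and both take the fixed-point formula for $x^\delta$ from the earlier finite-dimensional derivation, which carries over verbatim.
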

  
  \begin{proof} In the following argument we will assume that there are no vanishing singular values of $F$, so that $x^\dagger = x$.  If there {\it are} vanishing singular values, the argument is easily modified by appropriately exchanging $x$ for $x^\dagger$ as necesary.
  
 The prior derivation of (\ref{7/19/10.1}) holds regardless of whether $F$ is a finite-dimensional matrix or a linear compact operator, so (\ref{7/15/20.2}) follows with $p$ replaced by $x^\delta$. The error of the solution estimate is thus,  \begin{eqnarray}\nonumber  x - x^\delta &=&  \sum_{\{i:\,|u_i'y^\delta|\le 2\epsilon(\delta)\}}(v_i' x)v_i \,\, + \sum_{\{i: |u_i'y^\delta)| > 2\epsilon(\delta) \}}  \frac{(\sigma_i v_i' x - u_i' y^\delta)}{\sigma_i}v_i \\ 
  && + \sum_{\{i:\,|u_i'y^\delta| > 2\epsilon(\delta) \}} \frac{1}{2}\bigg(u_i'y^\delta - {\rm sgn}(u_i' y^\delta) \sqrt{(u_i'y^\delta)^2 -4\epsilon(\delta)^2}
\bigg)\frac{1}{\sigma_i}v_i, \label{7/19/20.1}
\end{eqnarray} where the first sum above is the ``truncation error" and the last two sums are the ``stability error".
It is now simply a matter of evaluating the norms of the three sums on the right-hand-side above, as the norm of the solution estimate error will be less than the sum of those norms.  
  
The square of the norm of the first sum on the right-hand-side of (\ref{7/19/20.1}) is, \[  \sum_{\{i:\,|u_i'y^\delta|\le 2\epsilon(\delta)\}}(v_i' x)^2 \,\, =\,\,   \sum_{i=1}^\infty (v_i' x)^2 H\big(2\epsilon(\delta) - |u_i'y^\delta|\big)\,\,  \le \,\, \| x\|^2,\] where $H(\cdot)$ is the Heaviside function.  The Dominated Convergence Theorem can then be invoked so that application of $\lim_{\epsilon(\delta)\rightarrow 0}$ to the sum on the left-hand-side above is seen to be equal to the sum over the individual limits of the terms of the sum on the right-hand-side of the first equality.     Since the limit of each of those terms is zero, the norm of the first sum on the right-hand-side of (\ref{7/19/20.1}) vanishes in the limit as $\epsilon(\delta) = \delta \rightarrow 0$.  
 
Now, both the second and third sums on the right-hand-side of (\ref{7/19/20.1}) are over terms with $i$ such that $ |u_i'y^\delta)| > 2\epsilon(\delta)$.
 But since $\epsilon(\delta) = \delta$, we have,  \[ 2\delta = 2\epsilon(\delta) < |u_i'y^\delta| = |\sigma_i v_i'x + u_i' \nu| \le |\sigma_i v_i'x| + \delta,\] with the final inequality above following from (\ref{7/12/20.1}). Consequently, for both the second and third sums on the right-hand-side of (\ref{7/19/20.1}), \begin{equation}\label{7/15/20.3} \frac{\delta}{\sigma_i} < |v_i' x|.\end{equation}
  So we can apply this to the second sum on the right-hand-side of (\ref{7/19/20.1}) to obtain, \begin{equation}\label{7/16/20.5} \sum_{\{i:\,|u_i'y^\delta|>2\epsilon(\delta)\}}  \frac{(\sigma_iv_i' x - u_i' y^\delta)^2}{\sigma_i^2} < \frac{\delta^2}{\sigma_{m-1}^2} <  (v_{m-1}'x)^2, \end{equation} 
  where $\sigma_{m-1}$ is the smallest singular value appearing in the sum.   
The first inequality follows from (\ref{7/12/20.1}) (i.e., (\ref{7/12/20.1}) implies that the above sum over just the numerators of the included terms is $\le \delta^2$).   As $\epsilon(\delta) = \delta\rightarrow 0$, it follows that $\sigma_{m-1}\rightarrow 0$ (since $F$ is compact), i.e., $m\rightarrow \infty$.  But $\lim_{m\rightarrow \infty} (v_{m-1}'x)^2 =0$.  Equation (\ref{7/16/20.5}) then implies that the norm of the second sum on the right-hand-side of  (\ref{7/19/20.1}) tends to zero as $\epsilon(\delta) = \delta \rightarrow 0$. 
  
So now we consider the third sum on the right-hand-side of  (\ref{7/19/20.1}).
Note that $0< 1-\sqrt{1-z^2} <z^2$ for $z\in (-1,1)$. Since each term of the sum is such that $ |u_i'y^\delta)| > 2\epsilon(\delta)$, it follows that,
\begin{equation}\label{7/22/20.1} \left| u_i' y^\delta - {\rm sgn}(u_i' y^\delta) \sqrt{(u_i'y^\delta)^2 -4\epsilon(\delta)^2} \right| <  \frac{[2\epsilon(\delta)]^2}{|u_i'y^\delta|} <  2\epsilon(\delta).\end{equation}
Applying this to the square of the norm of the third sum on the right-hand-side of  (\ref{7/19/20.1}), we obtain that squared norm as
\begin{eqnarray} \nonumber   \sum_{i=1}^\infty  \frac{1}{4\sigma_i^2}\left(u_i' y^\delta - {\rm sgn}(u_i' y^\delta) \sqrt{(u_i'y^\delta)^2 -4\epsilon(\delta)^2}\right)^2 \Big(1-H\big(2\epsilon(\delta) - |u_i'y^\delta|\big)\Big)  \\   <  \,\,\,\, \sum_{i=1}^\infty   \left(\frac{\epsilon(\delta)}{\sigma_i}\right)^2 \Big(1-H\big(2\epsilon(\delta) - |u_i'y^\delta|\big)\Big) \,\,\,  < \,\,\,\, \sum_{i=1}^\infty  (v_i'x)^2  = \|x\|^2. \label{7/24/20.6}\end{eqnarray} 
  The second inequality in (\ref{7/24/20.6})  follows from (\ref{7/15/20.3}), since $\epsilon(\delta)=\delta$.   The Dominated Convergence Theorem then implies that $\lim_{\epsilon(\delta)\rightarrow 0}$ can be taken inside the  sum on the left-hand-side of  (\ref{7/24/20.6}) in evaluation of the limit of that sum. The limit of each of the terms of the sum is zero, indicating that the limit of the norm of the third sum on the right-hand-side of  (\ref{7/19/20.1}) is zero as $\epsilon(\delta) = \delta\rightarrow 0$.  
 
 Since the limit of the norm of each of the sums in  (\ref{7/19/20.1}) is zero as $\epsilon(\delta) = \delta \rightarrow 0$, it follows that $\lim_{\epsilon(\delta)\rightarrow 0} \| x - x^\delta\| = 0$.  \end{proof}
 
 \section{An example of application of the regularization method}

 The figure on the following page shows the results of a numerical simulation, computed in MatLab.
 
   The source ``$x$" is a discetized time series of spikes in 100 data points, shown at the (1,1) position in the $(3\times 3)$-matix of tracings in the figure.  The source is smoothed by convolution with a Gaussian density and then some simulated white noise is added, giving the measurements  $``y+\text{noise}"$ time series at the (2,1) position of the figure.  The zero-order Tikhonov regularized signal estimate corresponding to the L-curve corner regularization parameter selection method \cite{hansen:1992} is shown at the figure's (1,2) position.  The alternative method presented in this paper, also using the L-curve corner regularization parameter selection method (for selection of $\epsilon$ in Theorem \ref{7/6/21.6}), is at the (1,3) position [in the caption, ``orthog" relates to the method's use of an orthogonal operator rather than a seminorm as in Tikhonov regularization; ``zero-order" refers to the use of the eigenvector matrix of $F'F$ for this purpose (where $F$ is the transfer matrix as in Theorem \ref{7/6/21.6}), rather than another choice based on some degree of prior knowledge of the source features.   The L-curve for the zero-order Tikhonov method and the method of Theorem \ref{7/6/21.6} are as indicated in the (2,2) and (2,3) positions, respectively.  The L-curves are generated over 30 orders of magnitude of regularization parameter value, with the solution estimate relative errors for each value of the regularization parameter shown at the respective (3,2) and (3,3) positions.  The latter depict the relative error of the methods for each value of regularization parameter, where relative error is defined by $\frac{\|\text{source - source estimate}\|}{\|\text{source}\|}$, and $\|\cdot\|$ is the Euclidean norm on $\mathbb{R}^n$ where $n$ is the number of data points comprising $x$ (i.e., $n=100$ here).  The plot at the (3,1) position is a scatter plot of (abscissa, ordinate) of the L-curve corner, for the L-curve generated at each iteration step of the new method (so, the plot at the (2,3) position is the full L-curve for the final iteration step, and its corner is one of the points of the scatter plot at the (3,1) position).  The points associated with successive iterations are connected by a dotted line.
 
 \hspace{-2cm} \includegraphics{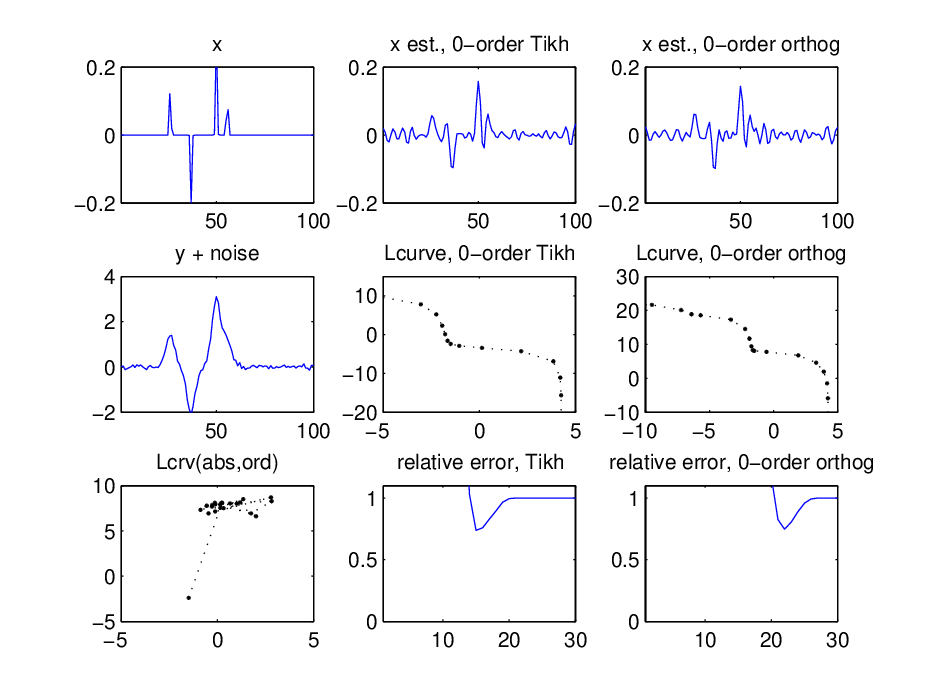}

 The quality of the new method's solution estimate is not quite as good as the estimate provided by zero-order Tihonov regularization, as is to be expected since zero-order Tikhonov regularization is essentially optimal under conditions of this simulation.  Nevertheless, the new method's solution estimate is respectable and, as indicated by Theorem \ref{7/6/21.7}, it represents a convergent method.

  \end{document}